\newtheorem{theorem}{Theorem}[section]
\newtheorem{lemma}[theorem]{Lemma}
\newtheorem{proposition}[theorem]{Proposition}
\newtheorem{definition}{Definition}
\newcommand{\C}{\mathbb{C}}
\newcommand{\rank}{\text{rank}}
\newcommand{\detp}{{\det}_+}
\begin{document}
\title{Matrix completion and semidefinite matrices}

\author{Olaf Dreyer\thanks{OD Consulting, Frankfurt am Main, email: olaf.dreyer@gmail.com}}

\date{\today}

\maketitle

\begin{abstract} Positive semidefinite Hermitian matrices that are not fully specified can be completed provided their underlying graph is chordal. If the matrix is positive definite the completion can be uniquely characterized as the matrix that maximizes the determinant, or as the matrix whose inverse has zeroes in those places that were undetermined in the original matrix. This paper extends these uniqueness results to the case of semidefinite matrices. Because the determinant vanishes for singular matrices, and because the inverse does not exist, we introduce a generalized determinant and use generalized inverses to formulate equivalent characterizations in the semidefinite case. For a class of matrices that are singular but of maximal rank unique characterizations can be given, just as in the positive definite case. 
\end{abstract}

\emph{AMS classification:} 15A10; 15A15; 15A18; 15A83; 15B48
 \\
 
\emph{Keywords:} positive definite; positive semidefinite; matrix completions 

\section{Introduction}
The question of which partial Hermitian matrices $H$ can be completed to give a fully specified positive definite Hermitian matrix was solved in \cite{grone}. The solution provided in \cite{grone} was constructive but only gave one element of the completion with each step. While giving the correct solution, the practical implementation of this procedure was rather tedious. In \cite{smith} a faster procedure was proposed that delivered the same matrix in far fewer steps by focusing on whole blocks of the matrix at once. 

The starting point of both procedures is the graph $\gamma$ that is constructed from the partial matrix. The vertices of $\gamma$ are given by the rows (or columns) $i$, $i=1,\ldots,n$, of the matrix $H$. Two vertices $i$ and $j$ of $\gamma$ are connected by an edge $e$ if the element 
	\begin{equation}
		h_{ij}
	\end{equation}
of the matrix is given. There is now one condition that the graph $\gamma$ has to satisfy for the procedure to work: The graph has to be \emph{chordal}. A graph is chordal if every loop of four or more elements has a chord, i.e. an edge that connects two nonconsecutive elements of the loop (for an introduction to graph theoretic notions see \cite{golumbic}). Given the graph $\gamma$ we can now proceed by constructing another graph $\Gamma$. The vertices of $\Gamma$ are given by the cliques of $\gamma$ (a clique is a maximal set of vertices that are all connected to each other; again see \cite{golumbic}) and two vertices of $\Gamma$ are connected if the two corresponding cliques have a nonempty intersection (see figure \ref{fig.allgraphs}).

\begin{figure}[hbt]
  \begin{center}
  	\includegraphics[width=12cm]{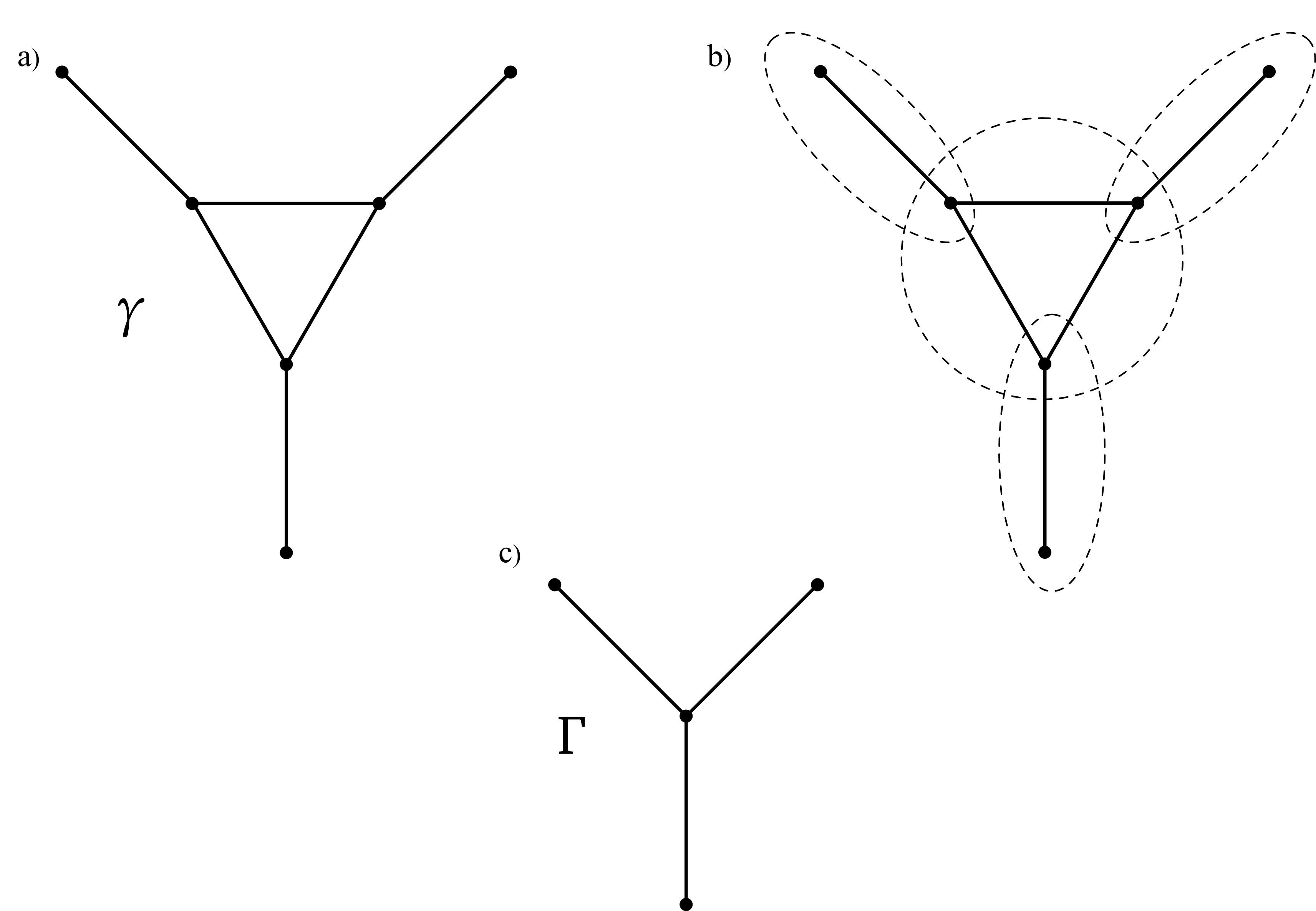}
  \end{center}
  \caption{a) Given a partial matrix $H$ we can construct a graph $\gamma$ that has an edge for all the specified elements of the matrix. The vertices of the graph are the rows (or columns) of the matrix. For the completion procedure to work the graph $\gamma$ needs to be chordal. The graph shown here is chordal because the central loop only has three elements and thus does not require a chord. b) The dashed lines show the cliques of $\gamma$. c) The cliques are the vertices of the new graph $\Gamma$. Edges in $\Gamma$ connect cliques with a nonempty intersection.}\label{fig.allgraphs}
\end{figure}

The contribution of \cite{smith} is to show that the completion procedure can be reduced to edges of the graph $\Gamma$. Any edge in $\Gamma$ is given by two cliques $\alpha$ and $\beta$ that have a nonzero intersection $\alpha\cap\beta$. Let 
\begin{equation}
	H[\alpha] = \begin{pmatrix}
		A & B \\
		B^\star & C 
	\end{pmatrix}
\end{equation}
be the submatrix of $H$ with rows and column indices in $\alpha$ and let 
\begin{equation}
	H[\beta] = \begin{pmatrix}
		C & D \\
		D^\star & E 
	\end{pmatrix}
\end{equation}
be the submatrix of $H$ with rows and column indices in $\beta$. We then have
\begin{equation}
	H[\alpha\cap\beta] = C.
\end{equation}
We can combine these two matrices in one large matrix with row and column indices in $\alpha\cup\beta$,
\begin{equation}
	H[\alpha\cup\beta] = \begin{pmatrix}
		A & B & X \\
		B^\star & C & D \\
		X^\star & D^\star & E
	\end{pmatrix},
\end{equation}
where the matrix $X$ remains to be specified. It was now shown in \cite[Theorem 3.2.]{smith} that a partial positive semidefinite matrix $H[\alpha\cup\beta]$ can be completed to a positive semidefinite matrix by choosing
\begin{equation}\label{eqn.defx}
	X = B C^+ D,
\end{equation}
where $C^+$ denotes the Moore-Penrose inverse of $C$ (see \cite{roger} and \cite{matrixanalysis} for details on the Moore-Penrose inverse). Note that the matrix does not need to be positive definite for the construction to work. It it sufficient for the matrix to be positive \emph{semi}definite. 

If the matrix is positive definite the choice of $X$ in equation (\ref{eqn.defx}) can be uniquely characterized in two ways. First, it is the choice that maximizes the determinant of the completed matrix. It is also the choice for which the inverse of $H[\alpha\cup\beta]$ has zeroes in those places where $X$ sits in $H[\alpha\cup\beta]$.

We see that the treatment of positive definite and positive semidefinite matrices differs. For positive definite matrices the completion in equation (\ref{eqn.defx}) has properties that uniquely determine it. For the semidefinite case we are just left with the existence result. Since the determinant of a singular matrix vanishes and since the inverse of a singular matrix does not exist it is not clear if we can hope to improve this situation. 

In this paper we now make two contributions. The first contribution is a new proof that the $X$ from equation (\ref{eqn.defx}) gives a positive semidefinite completion of $H[\alpha\cup\beta]$. We then extend the uniqueness results to the semidefinite case. To do this we need to introduce a generalized determinant that gives the determinant of the nonsingular part of the matrix. This determinant lacks many of the nice properties that the usual determinant has. If we restrict our attention to a special class of matrices, though, we can prove three results that hold for positive definite matrices: Fischer's inequality, the Schur determinant lemma, and Banachiewicz's form of the inverse. These three results will then be used to uniquely characterize the completion in equation (\ref{eqn.defx}) for semidefinite matrices. 

We start by introducing the class of matrices for which our results are valid. 

\section{Partitioned matrices of maximal rank}\label{sec.partition}
Let the Hermitian matrix $H\in M_n(\C)$ be partitioned as follows:
\begin{equation}
	H = \begin{pmatrix}\label{eqn.defh}
		A & B \\
		B^\star & C
	\end{pmatrix},
\end{equation}
with $A\in M_k(\C)$, $C\in M_l(\C)$, $1\le k,l\le n-1$, $k+l=n$. Let $V = K\oplus L$ be the decomposition of $V\simeq \C^n$ in accordance with the partition of $H$ in equation (\ref{eqn.defh}) so that $A$ is a map from $K$ to $K$, and $C$ is a map from $L$ to $L$. Let us further assume that $H$ is positive semidefinite. Let $w\in N(A)\subset K$ be an element of the nullspace of $A$, i.e. let $A w = 0$. For $v=(w,0)^T\in V$ we then have 
\begin{equation}
	v^\star H v=0.
\end{equation}
Since $H$ is positive semidefinite this implies (see \cite{matrixanalysis}) that we already have
\begin{equation}
	Hv = \begin{pmatrix}
		0 \\
		B^\star w
	\end{pmatrix} = 0,
\end{equation}
or 
\begin{equation}\label{eqn.nulla}
	N(A) \subset N(B^\star).
\end{equation}
In a similar fashion we can establish that the nullspace of $C$ is contained in the null space of $B$:
\begin{equation}\label{eqn.nullc}
	N(C) \subset N(B).
\end{equation}
The relations for the nullspaces are equivalent to these relations for the ranges of $A$ and $C$:
\begin{align}
	R(B) & \subset R(A) \\
	R(B^\star) & \subset R(C).
\end{align}
Because of these properties $H$ is said to have the \emph{column inclusion property}\cite{matrixanalysis}. It follows from equations (\ref{eqn.nulla}) and (\ref{eqn.nullc}) that 
\begin{equation}
	N(A) \oplus N(C) \subset N(H).
\end{equation}
This implies that the rank of $H$ is less than or equal to the sum of the ranks of $A$ and $C$:
\begin{align}
	\rank\ H & = n - \dim N(H) \\
	& \le n - (\dim N(A) + \dim N(C)) \\
	& = k-\dim N(A) + (n-k) - \dim N(C) \\
	& = \rank\ A + \rank\ C
\end{align}
We have equality if and only if $N(H) = N(A)\oplus N(C)$. In the following, matrices $H$ for which this equality holds will be of particular interest to us which is why we make the following definition:

\begin{definition}
	Let $H$ be a positive semidefinite Hermitian matrix that is partitioned as in equation (\ref{eqn.defh}). We say that $H$ is of \emph{maximal rank} if and only if $N(H) = N(A)\oplus N(C)$\footnote{We should be precise and say that $H$ is of maximal rank with respect to the partition in equation (\ref{eqn.defh}). For the sake of readability we will refrain from doing so and rely on the reader to infer the partition from the context.}. 
\end{definition}

When $H$ is of maximal rank, it vanishes on
\begin{equation}
	N(A)\oplus N(C),
\end{equation}
and is positive definite when restricted to the sum of the ranges of $A$ and $C$:
\begin{equation}
	R(A) \oplus R(C).
\end{equation} 
In section \ref{sec.ext} we will use this property to extend results that are valid for positive definite matrices to partitioned matrices of maximal rank. We need one more notion before we can formulate these results.

\section{The generalized determinant}\label{sec.gendet}
A Hermitian matrix $H$ defines a nonsingular map from its range $R(H)$ to its range $R(H)$. If the nullspace $N(H)$ is nonzero, i.e. if $H$ is singular, the determinant of $H$ vanishes. The determinant thus contains no information about the nonsingular map that is $H$ restricted to $R(H)$. To recover this information we introduce a generalized determinant $\detp$:

\begin{definition}\label{def.gendet}
	Let $H$ be Hermitian. Let $\bar H$ be $H$ restricted to the range of $H$:
	\begin{equation}
		\bar H = H \vert_{R(H)} : R(H) \longrightarrow R(H) \\
	\end{equation}
	For $H\ne 0$ we then set 
	\begin{equation}
		\detp H = \det \bar H.
	\end{equation}
	For $H=0$ we set $\detp H = 1$.
\end{definition}

We note a number of properties of the generalized determinant:

\begin{lemma}\label{lem.gentdetprop}
	Let $H$ be Hermitian of rank $r\le n$. Let $\lambda_i, i=1, \ldots, n$, be the eigenvalues of $H$. Let us assume that they are ordered in such a way that $\lambda_i = 0$, for $i>r$. We then have:
	\begin{enumerate}
		\item If $H$ is of full rank (i.e. if $r=n$) we have
		\begin{equation}
			\detp H = \det H = \prod_i \lambda_i
		\end{equation}
		\item For $r<n$ we have
		\begin{equation}
			\detp H = \prod_{i=1}^r \lambda_i
		\end{equation}
		\item We have
		\begin{equation}\label{eqn.limit}
			\detp H = \lim_{\epsilon \rightarrow 0} \frac{\det( H + \epsilon I )}{\epsilon^{n-r}}
		\end{equation}
		\item For $c>0$ we have
		\begin{equation}
			\detp c H = c^{r} \detp H
		\end{equation}
	\end{enumerate}   
\end{lemma}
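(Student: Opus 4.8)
The plan is to prove the four properties essentially in sequence, using the spectral theorem for Hermitian matrices as the backbone, since $\detp H$ is defined as the honest determinant of $H$ restricted to its range.

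The plan for claims (1) and (2) is as follows. Since $H$ is Hermitian, the spectral theorem gives an orthonormal eigenbasis, and $V \simeq \C^n$ decomposes orthogonally as $R(H) \oplus N(H)$, where $N(H)$ is spanned by the eigenvectors with eigenvalue $0$ and $R(H)$ is spanned by the eigenvectors with nonzero eigenvalue. The restriction $\bar H = H\vert_{R(H)}$ is then diagonal in the eigenbasis of $R(H)$ with diagonal entries $\lambda_1,\ldots,\lambda_r$ (the nonzero eigenvalues), so $\det\bar H = \prod_{i=1}^r \lambda_i$, which is exactly claim (2); claim (1) is the special case $r=n$, where $N(H)=0$, $R(H)=V$, $\bar H = H$, and the product runs over all $n$ eigenvalues. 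I would also note that the $H=0$ convention $\detp H = 1$ is consistent with the empty product when $r=0$.

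For claim (3), the plan is to compute $\det(H+\epsilon I)$ via the eigenvalues. For $\epsilon > 0$, the matrix $H + \epsilon I$ has eigenvalues $\lambda_i + \epsilon$, so
\begin{equation}
	\det(H+\epsilon I) = \prod_{i=1}^n (\lambda_i + \epsilon) = \left(\prod_{i=1}^r (\lambda_i+\epsilon)\right)\left(\prod_{i=r+1}^n \epsilon\right) = \epsilon^{n-r}\prod_{i=1}^r(\lambda_i+\epsilon),
\end{equation}
using $\lambda_i = 0$ for $i>r$. Dividing by $\epsilon^{n-r}$ and letting $\epsilon\to 0$ gives $\lim_{\epsilon\to0}\prod_{i=1}^r(\lambda_i+\epsilon) = \prod_{i=1}^r\lambda_i = \detp H$ by continuity, matching claim (2). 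The edge case $r=n$ reduces to the ordinary determinant limit, and the case $H=0$ (so $r=0$) gives $\det(\epsilon I)/\epsilon^n = 1$, consistent with the convention.

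For claim (4), with $c>0$ the matrix $cH$ is again Hermitian with the same range $R(cH)=R(H)$ (scaling by a positive constant changes neither the range nor the nullspace) and eigenvalues $c\lambda_i$. Applying claim (2) to $cH$ gives $\detp(cH) = \prod_{i=1}^r (c\lambda_i) = c^r\prod_{i=1}^r\lambda_i = c^r\detp H$. I expect the main subtlety, rather than a genuine obstacle, to be bookkeeping around the range and nullspace: one must verify that passing to $H+\epsilon I$ or $cH$ preserves the correct rank structure so that the power $\epsilon^{n-r}$ and the exponent $c^r$ come out correctly, and that the ordering convention on the eigenvalues is used consistently. The positivity $c>0$ in claim (4) and $\epsilon>0$ in claim (3) is what keeps the range fixed and avoids cancellation, so I would be careful to flag where positivity is actually used.
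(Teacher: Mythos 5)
Your proposal is correct and takes essentially the same route as the paper: the paper's proof is a one-line appeal to the spectral decomposition $H = UDU^\star$, and your argument simply carries out the details of that same diagonalization (identifying $R(H)$ with the span of eigenvectors for nonzero eigenvalues, then computing each claim from the eigenvalues). The only minor imprecision is your closing remark on positivity: in claims (3) and (4) the identities $\det(H+\epsilon I)=\prod_i(\lambda_i+\epsilon)$ and $\detp(cH)=c^r\detp H$ actually hold for any nonzero real $\epsilon$ and any $c\neq 0$, so positivity is a hypothesis of the statement rather than something your computation needs.
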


\begin{proof}
	All of these identities follow from the fact that $H = U D U^\star$ for a unitary $U$ and a diagonal $D$ that contains the eigenvalues of $H$ on the diagonal.
\end{proof}

We note that the generalized determinant lacks many of the properties the usual determinant has. In particular, it is not a continuous function of $H$, and the generalized determinant of the product of two matrices is not the product of the two generalized determinants. 

\section{The extensions}\label{sec.ext}
With the preparations in section \ref{sec.partition}, and the definition of the generalized determinant in the last section, we now want to extent three results to singular matrices: Fischer's inequality, the determinant equality for the Schur complement, and Banachiewicz's form of the inverse of a matrix.

\subsection{Fischer's inequality}
Fischer's inequality states that for a Hermitian positive semidefinite $H$ as in equation (\ref{eqn.defh}) we have (see e.g. \cite{horn}):
\begin{equation}
	\det H \le \det A\; \det C.
\end{equation}
Furthermore, if $H$ is positive definite, we have equality if and only if $B=0$. If $H$ is singular, the determinant of $H$ vanishes and the inequality is no longer much of a constraint. We can also no longer infer that $B$ vanishes in the case of equality. 

We can do better when $H$ is of maximal rank. In this case $H$ vanishes on $N(A)\oplus N(C)$ and is positive definite on 
\begin{equation}
	R(A)\oplus R(C).
\end{equation}
We now look at the restriction of $H$ and all its submatrices to this space and use the Fischer's inequality there. Note that because $H$ is positive semidefinite it has the column (and row) inclusion property and we have
\begin{equation}
	N(A) \subset N(B^\star) 
\end{equation}
and
\begin{equation}
	R(B^\star) \subset R(C). 
\end{equation}
$B^\star$ thus vanishes on the complement of $R(A)$ and maps into $R(C)$. The restriction of $B^\star$ to $R(A)$ thus keeps the nontrivial part of $B^\star$. The same is true for $B$ and its restriction to $R(C)$. As in definition \ref{def.gendet}, we denote the restriction of $H$ to its range by $\bar H$. We obtain
\begin{align}
	\detp H & = \det \bar H \\
	&\le \det \bar A\; \det \bar C \\
	& = \detp A\; \detp C.
\end{align}
Because we are looking at the restriction of $H$ to $R(A)\oplus R(C)$ and because $H$ is positive definite on $R(A)\oplus R(C)$, we also get that $B$, when restricted to $R(C)$, vanishes if and only if equality holds above. Since $B$ vanishes on $N(C)$, this is the case if and only if
\begin{equation}
	B = 0.
\end{equation}
We thus have the following result:

\begin{proposition}\label{prop.fischer}
	Let a Hermitian $H$ be positive semidefinite and partitioned as in equation (\ref{eqn.defh}). If $H$ is of maximal rank then
	\begin{equation}
		\detp H \le \detp A\; \detp C,
	\end{equation} 
	with equality if and only if $B = 0$. 
\end{proposition}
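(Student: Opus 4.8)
The plan is to reduce the singular statement to the classical Fischer inequality by restricting every matrix in sight to the nonsingular subspace on which $H$ acts faithfully. First I would identify $R(H)$ explicitly. Because $H$ is of maximal rank we have $N(H) = N(A)\oplus N(C)$, and since $H$ is Hermitian $R(H) = N(H)^\perp = R(A)\oplus R(C)$. I would then check that this subspace is invariant under $H$: using the column inclusion relations $R(B)\subset R(A)$ and $R(B^\star)\subset R(C)$, together with the fact that the Hermitian diagonal blocks $A$ and $C$ preserve $R(A)$ and $R(C)$ respectively, a short computation shows that $H$ maps $R(A)\oplus R(C)$ into itself. Hence $\bar H = H|_{R(H)}$ is simply $H$ restricted to this invariant subspace.

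Next I would write $\bar H$ as a genuine block matrix with respect to the orthogonal splitting $R(H) = R(A)\oplus R(C)$. The same inclusion relations guarantee that the restriction of $B$ carries $R(C)$ into $R(A)$ and vanishes on $N(C)$, so that its nontrivial part is a map $\bar B : R(C)\to R(A)$, while the diagonal blocks are exactly $\bar A = A|_{R(A)}$ and $\bar C = C|_{R(C)}$. Thus
\begin{equation}
	\bar H = \begin{pmatrix} \bar A & \bar B \\ \bar B^\star & \bar C \end{pmatrix},
\end{equation}
and this matrix is positive definite because $H$ is positive definite on $R(A)\oplus R(C)$.

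Now the classical Fischer inequality applies verbatim to the positive definite $\bar H$, giving $\det \bar H \le \det \bar A\; \det \bar C$ with equality if and only if $\bar B = 0$. Translating through Definition \ref{def.gendet}, namely $\detp H = \det \bar H$, $\detp A = \det \bar A$, and $\detp C = \det \bar C$, yields the asserted inequality. For the equality case I would finish by observing that $\bar B = 0$ forces $B = 0$: since $B$ already vanishes on $N(C)$ and its image lies in $R(A)$, it is completely determined by its restriction $\bar B$ to $R(C)$, so $\bar B = 0$ if and only if $B = 0$.

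I expect the only genuine subtlety to be the bookkeeping in the second step, namely verifying that the restriction really decomposes as a clean two-by-two block matrix over $R(A)\oplus R(C)$, with the off-diagonal block simultaneously respecting both the inclusion $R(B)\subset R(A)$ on its range and the vanishing on $N(C)$ on its domain. Once the two inclusion properties are used to pin down the shape of $\bar B$, everything else is a direct application of a known result together with the definition of $\detp$.
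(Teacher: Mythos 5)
Your proposal is correct and follows essentially the same route as the paper: restrict $H$ to $R(A)\oplus R(C)$ (where maximal rank makes it positive definite), use the column inclusion relations to identify the restricted blocks $\bar A$, $\bar B$, $\bar C$, apply the classical Fischer inequality with its equality case, and recover $B=0$ from $\bar B=0$ because $B$ vanishes on $N(C)$. You merely spell out some bookkeeping (the identification $R(H)=R(A)\oplus R(C)$ and the invariance of that subspace) that the paper leaves implicit.
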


Let us note that we arrived at this result in two steps. Because $H$ is positive semidefinite we know that $B$ restricted to $N(C)$ is zero. That the restriction of $B$ to the range $R(C)$ is also zero follows from Fischer's equality for the positive definite matrix that is $H$ restricted to $R(A)\oplus R(C)$.

We note that to show the inequality we could have also used equation (\ref{eqn.limit}) of lemma \ref{lem.gentdetprop}.

\subsection{Schur complement}
The Schur complement arises naturally when using Gaussian elimination to solve a linear equation. Let $H$ be as in equation (\ref{eqn.defh}) and let us assume we want to solve
\begin{align}
0 & = H \begin{pmatrix}
		k \\
		l
	\end{pmatrix} \\
	& = \begin{pmatrix}
		A k + B l \\
		B^\star k + C l
 \end{pmatrix}.	\label{eqn:lineq}
\end{align}
Assuming that $A$ is invertible we can solve for $k$ in the first equation of (\ref{eqn:lineq}) to obtain
\begin{equation}\label{eqn.gauss1}
	k = - A^{-1}B l.
\end{equation}
The second equation of (\ref{eqn:lineq}) then gives
\begin{equation}\label{eqn.gauss2}
	(C - B^\star A^{-1}B) l = 0.
\end{equation}
The expression in the parentheses is called the Schur complement of $A$ in $H$ and is denoted by $H/A$:
\begin{equation}
	H/A = C - B^\star A^{-1}B
\end{equation}
For a positive semidefinite matrix $H$ we may generalize the definition to
\begin{equation}
	H/A = C - B^\star A^+B,
\end{equation}
where $A^+$ is the Moore-Penrose inverse of $A$ (again, see \cite{roger} and \cite{matrixanalysis}). Note that the choice of generalized inverse does not matter here since $R(B)\subset R(A)$. Emilie Haynsworth introduced the name and showed that the Schur complement possesses many interesting properties \cite{hayns} (see \cite{horn} for a detailed exposition). In \cite{schur} Issai Schur showed that for a positive definite $H$ the determinant of $H$ satisfies
\begin{equation}
	\det H = \det A\; \det H/A.
\end{equation}
We now want to show that this equality also holds for the generalized determinant. Again, we focus our attention on $R(A)\oplus R(C)$ where $H$ is positive definite. Using the notation from the previous section we obtain:
\begin{align}
	\detp H & = \det \bar H \\
	& = \det \bar A \; \det \bar H / \bar A \\
	& = \detp A  \; \det \bar H / \bar A
\end{align}
We need to convince ourselves that the last determinant is equal to $\detp H/A$. To check this, we need to show that 
\begin{align}
	N(H/A) & = N( C - B^\star A^+ B ) \\ 
	& = N(C).
\end{align}
We already know that $N(C) \subset N(H/A)$. To show equality let $l\in N(H/A)$. This $l$ thus satisfies equation (\ref{eqn.gauss2}) that was obtained through Gaussian elimination. We define $k$ according to equation (\ref{eqn.gauss1}): 
\begin{equation}
	k = - A^+ B l.
\end{equation}
It then follows that 
\begin{equation}
	H\begin{pmatrix}
		k\\
		l
	\end{pmatrix} = 0.
\end{equation}
Since $H$ is of maximal rank this implies that $(k,l)^T\in N(A)\oplus N(C)$. In particular, we have $l\in N(C)$. We thus obtain our second result:

\begin{proposition}\label{prop.schur}
 Let a Hermitian $H$ be positive semidefinite and partitioned as in equation (\ref{eqn.defh}). If $H$ is of maximal rank then
	\begin{equation}
		\detp H = \detp A\; \detp H/A.
	\end{equation}	
\end{proposition}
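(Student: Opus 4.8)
The plan is to reduce the singular identity to the classical positive-definite Schur determinant formula by passing to the range of $H$. Since $H$ is of maximal rank we have $N(H) = N(A)\oplus N(C)$, and taking orthogonal complements within the orthogonal decomposition $V = K\oplus L$ (using that for a Hermitian map the range is the orthogonal complement of the nullspace) gives $R(H) = R(A)\oplus R(C)$, on which $\bar H = H\vert_{R(H)}$ is positive definite. First I would record the block structure of $\bar H$ relative to this decomposition: by the column inclusion property $B$ maps $R(C)$ into $R(A)$ and kills $N(C)$, and symmetrically for $B^\star$, so that
\[
	\bar H = \begin{pmatrix} \bar A & \bar B \\ \bar B^\star & \bar C \end{pmatrix},
\]
where $\bar A = A\vert_{R(A)}$ and $\bar C = C\vert_{R(C)}$ are the positive-definite restrictions and $\bar B = B\vert_{R(C)}$ has codomain $R(A)$.

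Since $\bar A$ is invertible, the ordinary Schur determinant identity applies verbatim to $\bar H$, giving $\det \bar H = \det \bar A\,\det(\bar H/\bar A)$ with $\bar H/\bar A = \bar C - \bar B^\star \bar A^{-1}\bar B$. By Definition \ref{def.gendet} we have $\det \bar H = \detp H$ and $\det \bar A = \detp A$, so it remains only to identify $\det(\bar H/\bar A)$ with $\detp(H/A)$.

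This identification is the main obstacle, and it reduces to the claim $N(H/A) = N(C)$. Indeed, once this holds, $R(H/A) = R(C)$ and the restriction $(H/A)\vert_{R(C)}$ coincides with $\bar H/\bar A$: on $R(C)$ the relation $R(B)\subset R(A)$ forces $A^+$ to act as $\bar A^{-1}$ on $Bl$, so $A^+B = \bar A^{-1}\bar B$ there and $(H/A)\vert_{R(C)} = \bar C - \bar B^\star \bar A^{-1}\bar B = \bar H/\bar A$; applying Definition \ref{def.gendet} to $H/A$ then yields $\det(\bar H/\bar A) = \detp(H/A)$, closing the argument. The inclusion $N(C)\subset N(H/A)$ is immediate, since $l\in N(C)$ gives $Bl=0$ and hence $(H/A)l = Cl - B^\star A^+Bl = 0$.

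For the reverse inclusion I would run the Gaussian-elimination computation backwards. Given $l\in N(H/A)$, set $k = -A^+Bl$ and verify $H(k,l)^T = 0$ directly: the lower block equals $(H/A)l = 0$ by the choice of $l$, and the upper block equals $Bl - AA^+Bl = 0$ because $AA^+$ fixes $R(A)\supset R(B)$. The maximal-rank hypothesis is exactly what converts this into the conclusion, for $(k,l)^T\in N(H) = N(A)\oplus N(C)$ forces $l\in N(C)$. The delicate point throughout is that each "restrict to the range" step is legitimate only because of the splitting $N(H) = N(A)\oplus N(C)$; I would therefore take care that every such identification is traced back to this hypothesis rather than assumed, since none of them survives for a general singular $H$.
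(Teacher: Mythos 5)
Your proposal is correct and follows essentially the same route as the paper: restrict to $R(A)\oplus R(C)$ where $\bar H$ is positive definite, apply the classical Schur determinant identity there, and reduce the identification $\det(\bar H/\bar A) = \detp (H/A)$ to the claim $N(H/A)=N(C)$, proved in the reverse direction by setting $k=-A^+Bl$ and invoking the maximal-rank hypothesis. The only difference is that you spell out details the paper leaves implicit (the block structure of $\bar H$ and the verification that $(H/A)\vert_{R(C)} = \bar H/\bar A$), which is a welcome tightening rather than a departure.
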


\subsection{The inverse}
The last result concerns the inverse of $H$. When $H$ is positive definite its inverse can be written as
\begin{equation}\label{eqn.bana}
	H^{-1} = \begin{pmatrix}
		A^{-1} + A^{-1}B(H/A)^{-1}B^\star A^{-1} & - A^{-1} B (H/A)^{-1} \\
		- (H/A)^{-1} B^\star A^{-1} & (H/A)^{-1}
	\end{pmatrix}.
\end{equation} 
This is a remarkable formula because to find the inverse of $H$ we just need to invert $A$ and $H/A$. This formula was first established by Banachiewicz \cite{bana} (see also \cite[p.112]{frazer}). We now want to adapt this formula to our situation where $H$ might be singular but is of maximal rank. 

Again, we start by looking at the restriction $\bar H$ of $H$ to its range $R(A)\oplus R(C)$. $\bar H$ is positive definite and we can express its inverse in the form of equation (\ref{eqn.bana}). In the last section we established that the ranges and nullspaces of $C$ and $H/A$ coincide so that by replacing the inverses in equation (\ref{eqn.bana}) with Moore-Penrose inverses we obtain the unique Moore-Penrose inverse of $H$.

\begin{proposition}\label{prop.bana}
	Let a Hermitian $H$ be positive semidefinite and partitioned as in equation (\ref{eqn.defh}). If $H$ is of maximal rank then its Moore-Penrose inverse is given by
	\begin{equation}\label{eqn.banapenrose}
	H^{+} = \begin{pmatrix}
		A^{+} + A^{+}B(H/A)^{+}B^\star A^{+} & - A^{+} B (H/A)^{+} \\
		- (H/A)^{+} B^\star A^{+} & (H/A)^{+}
	\end{pmatrix}.
	\end{equation}
\end{proposition}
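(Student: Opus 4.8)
The plan is to exploit the orthogonal splitting that $H$ induces on $V$ and to reduce everything to the positive definite Banachiewicz formula (\ref{eqn.bana}) applied to the restriction $\bar H$. Since $A$ and $C$ are Hermitian we have the orthogonal decompositions $K = R(A)\oplus N(A)$ and $L = R(C)\oplus N(C)$, and because $H$ is of maximal rank these assemble into $V = \big(R(A)\oplus R(C)\big)\oplus\big(N(A)\oplus N(C)\big) = R(H)\oplus N(H)$. For a Hermitian matrix the Moore--Penrose inverse is the unique linear map that inverts $\bar H$ on $R(H)$ and annihilates $N(H) = R(H)^\perp$. It therefore suffices to verify that the matrix $M$ on the right-hand side of (\ref{eqn.banapenrose}) satisfies these two properties: $M|_{N(H)} = 0$ and $M|_{R(H)} = \bar H^{-1}$.

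First I would check that $M$ vanishes on $N(H) = N(A)\oplus N(C)$. Because $A$ and $H/A$ are Hermitian we have $N(A^+) = N(A)$ and $N\big((H/A)^+\big) = N(H/A) = N(C)$, the last equality being exactly the content established in the proof of Proposition \ref{prop.schur}. Feeding $(w,0)^T$ with $w\in N(A)$ into $M$ makes every block vanish through the factor $A^+ w = 0$; feeding $(0,u)^T$ with $u\in N(C)$ makes every block vanish through the factor $(H/A)^+ u = 0$. Hence $M|_{N(H)} = 0$.

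The substantive step is to identify $M|_{R(H)}$ with $\bar H^{-1}$. The key lemma I would prove is that the Schur complement commutes with restriction, i.e. that $H/A$ restricted to $R(C)$ equals $\bar H/\bar A$, where $\bar A = A|_{R(A)}$ and $\bar B$ is $B$ restricted to $R(C)$ and viewed as a map into $R(A)$. This follows from the column inclusion property: $B$ annihilates $N(C)$ and maps into $R(A)$, while $B^\star$ annihilates $N(A)$ and maps into $R(C)$, so on $R(C)$ the product $B^\star A^+ B$ coincides with $\bar B^\star \bar A^{-1}\bar B$, using $A^+|_{R(A)} = \bar A^{-1}$. Since $H/A$ is Hermitian with $N(H/A)=N(C)$, we get $R(H/A)=R(C)$ and $(H/A)^+|_{R(C)} = (\bar H/\bar A)^{-1}$. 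With these identifications, evaluating $M$ on a vector $(x,y)^T\in R(A)\oplus R(C)$ reproduces block by block the positive definite formula (\ref{eqn.bana}) for $\bar H^{-1}$, the column inclusion property guaranteeing at each stage that intermediate vectors land in the correct range so that $A^+$ and $(H/A)^+$ act as genuine inverses there. Thus $M|_{R(H)} = \bar H^{-1}$, and combined with $M|_{N(H)} = 0$ this gives $M = H^+$.

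I expect the main obstacle to be the bookkeeping in this last identification: one must track that every occurrence of $B$, $B^\star$, $A^+$ and $(H/A)^+$ respects the range/nullspace splitting, so that the Moore--Penrose inverses may be replaced by honest inverses of the restrictions without error terms. An alternative, and perhaps safer, route would be to verify the four Penrose axioms $HMH=H$, $MHM=M$, $(HM)^\star=HM$ and $(MH)^\star=MH$ directly, again leaning on $R(B)\subset R(A)$ and $R(B^\star)\subset R(C)$ to collapse the products; but the restriction argument above seems cleaner and is closest in spirit to the positive definite derivation the paper already uses.
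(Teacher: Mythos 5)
Your proposal is correct and takes essentially the same approach as the paper: restrict $H$ to its range $R(A)\oplus R(C)$, apply the positive definite Banachiewicz formula (\ref{eqn.bana}) to $\bar H$, and use the fact from Proposition \ref{prop.schur} that $N(H/A)=N(C)$ (hence $R(H/A)=R(C)$) to justify replacing the inverses by Moore--Penrose inverses. The additional steps you supply---the characterization of $H^+$ as the map inverting $\bar H$ on $R(H)$ and vanishing on $N(H)$, the vanishing of the candidate matrix on $N(A)\oplus N(C)$, and the lemma that $H/A$ restricted to $R(C)$ equals $\bar H/\bar A$---are exactly the details the paper leaves implicit, not a different argument.
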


This result can also be deduced from \cite[Theorem 4.6]{ouellette}. 

\section{Matrix completion}
Let $H$ be a Hermitian matrix in $M_n(\C)$ that is partitioned as follows:
\begin{equation}\label{eqn.defnewh}
	H = \begin{pmatrix}
		A & B & X \\
		B^\star & C & D \\
		X^\star & D^\star & E
	\end{pmatrix}
\end{equation}
As in the introduction we use the notation from \cite{matrixanalysis} to denote submatrices of $H$. For $\alpha\subset \{1, \ldots, n\}$ let
\begin{equation}
	H[\alpha]
\end{equation}
be the submatrix of $H$ with row and column indices in $\alpha$. For $\alpha,\beta\subset \{1, \ldots, n\}$ we let
\begin{equation}
	H[\alpha, \beta] 
\end{equation}
be the submatrix with row indices in $\alpha$ and column indices in $\beta$. Now let $\alpha$ and $\beta$ be such that
\begin{equation}
	H[\alpha] = \begin{pmatrix}
		A & B \\
		B^\star & C
	\end{pmatrix}
\end{equation}
and 
\begin{equation}
	H[\beta] = \begin{pmatrix}
		C & D \\
		D^\star & E
	\end{pmatrix}.
\end{equation}
For $\gamma = \alpha\cap\beta$ we then have
\begin{equation}
	H[\gamma] = C,
\end{equation}
and for the matrix $X$ in the upper right corner we have
\begin{equation}
	H[ \alpha-\gamma, \beta-\gamma ] = X.	
\end{equation}
We now want to know under what conditions we can choose $X$ so that the matrix $H$ is positive semidefinite. Before we state the result we note this helpful theorem:

\begin{theorem}\label{theo.albert}
	Let $H$ be Hermitian and partitioned as in equation (\ref{eqn.defh}). Then these two statements are equivalent:
	\begin{enumerate}
		\item $H$ is positive semidefinite.
		\item $A$ and $H/A$ are positive semidefinite, and $R(B)\subset R(A)$.
	\end{enumerate}	
\end{theorem}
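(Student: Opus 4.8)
The plan is to exhibit an explicit congruence that block-diagonalizes $H$ into $A$ and the Schur complement $H/A$, and then to invoke the fact that congruence by an invertible matrix preserves positive semidefiniteness. The single algebraic identity doing all the work is
\begin{equation}
	S^\star H S = \begin{pmatrix} A & 0 \\ 0 & H/A \end{pmatrix}, \qquad S = \begin{pmatrix} I & -A^+ B \\ 0 & I \end{pmatrix},
\end{equation}
where $S$ is upper triangular with identity diagonal blocks, hence invertible with $S^{-1} = \begin{pmatrix} I & A^+ B \\ 0 & I \end{pmatrix}$.

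First I would verify this identity by multiplying out. Forming $HS$ produces the $(1,2)$ block $B - A A^+ B = (I - AA^+)B$, and left-multiplying by $S^\star$ produces the $(2,1)$ block $B^\star - B^\star A^+ A = B^\star(I - A^+ A)$. Since $A$ is Hermitian, $A^+$ is Hermitian and $AA^+ = A^+ A$ is the orthogonal projection onto $R(A)$. The inclusion $R(B)\subset R(A)$ then gives $AA^+ B = B$, killing the $(1,2)$ block, while the equivalent form $N(A)\subset N(B^\star)$ from Section \ref{sec.partition} gives $B^\star A^+ A = B^\star$, killing the $(2,1)$ block. What remains in the lower right corner is exactly $C - B^\star A^+ B = H/A$. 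This is the step I expect to be the only delicate point: with the Moore-Penrose inverse in place of a genuine inverse one no longer has $AA^+ = I$, so the off-diagonal blocks vanish solely because of the column inclusion property, not automatically. This is also precisely where the hypothesis $R(B)\subset R(A)$ enters, explaining why it appears explicitly in statement (2).

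With the identity in hand the equivalence follows quickly. Because $S$ is invertible, $H$ is positive semidefinite if and only if $S^\star H S$ is, and a block-diagonal Hermitian matrix is positive semidefinite if and only if each diagonal block is; hence, assuming $R(B)\subset R(A)$, we have that $H$ is positive semidefinite exactly when both $A$ and $H/A$ are. To close the loop I would account for the column inclusion in each direction. For (2)$\Rightarrow$(1) it is part of the hypothesis, so the congruence applies directly. For (1)$\Rightarrow$(2), positive semidefiniteness of $H$ already forces $R(B)\subset R(A)$ by the column inclusion property derived in Section \ref{sec.partition}; the congruence then applies and reads off that both $A$ and $H/A$ are positive semidefinite. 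That $A$ is positive semidefinite in this direction also follows independently from the fact that a principal submatrix of a positive semidefinite matrix is positive semidefinite, giving a reassuring consistency check on the argument.
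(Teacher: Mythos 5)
Your proof is correct, but note the point of comparison here: the paper does not prove Theorem \ref{theo.albert} at all --- it quotes the result and defers the proof to Albert's paper \cite{albert}. So what you have produced is a self-contained argument where the paper has only a citation. Your argument itself is sound, and the two delicate points are handled correctly: the off-diagonal blocks $(I-AA^+)B$ and $B^\star(I-A^+A)$ of $S^\star H S$ vanish precisely because $AA^+=A^+A$ is the orthogonal projector onto $R(A)$ (using that $A$ is Hermitian, so $A^+$ is too) combined with $R(B)\subset R(A)$; and that hypothesis is available in both directions --- assumed outright in (2)$\Rightarrow$(1), and forced by the column inclusion property of Section \ref{sec.partition} in (1)$\Rightarrow$(2), so there is no circularity. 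One small sharpening: the $(2,2)$ block of $S^\star H S$ equals $H/A$ unconditionally, since $A^+(I-AA^+)=0$ by the Moore--Penrose identity $A^+AA^+=A^+$; column inclusion is needed only to kill the off-diagonal blocks. What your route buys beyond self-containedness is that the same congruence $S^\star H S = \mathrm{diag}(A,\, H/A)$ with $S$ invertible also delivers, essentially for free, other ingredients the paper imports from the literature: the rank additivity $\rank\ H = \rank\ A + \rank\ H/A$ used in the determinant-maximization theorem (cited there to \cite{ouellette}) follows because congruence preserves rank, and in the nonsingular case inverting the congruence gives Banachiewicz's formula (\ref{eqn.bana}), the positive-definite ancestor of Proposition \ref{prop.bana}. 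So your identity could serve as a unified backbone for several results the paper treats as external inputs.
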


A proof of this result can be found in \cite{albert}. We now have:

\begin{theorem}\label{theo.x}
	Let $H\in M_n(\C)$ be partitioned as in (\ref{eqn.defnewh}) and let $H[\alpha]$ and $H[\beta]$ be positive semidefinite. Setting
	\begin{align}
		X & = B C^+ D \\
		& = H[\alpha-\gamma, \gamma] H[\gamma]^+ H[\gamma, \beta-\gamma]
	\end{align}
	turns $H$ into a positive semidefinite matrix.
\end{theorem}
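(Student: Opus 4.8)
The plan is to apply Theorem~\ref{theo.albert} (Albert's theorem) with the \emph{shared} block $C$ as the pivot. First I would reorder the rows and columns of $H$ by the permutation that moves the middle block to the top, obtaining
\begin{equation}
	\tilde H = \begin{pmatrix}
		C & B^\star & D \\
		B & A & X \\
		D^\star & X^\star & E
	\end{pmatrix}.
\end{equation}
Since a permutation acts by congruence, $\tilde H$ is positive semidefinite if and only if $H$ is, so it suffices to prove the claim for $\tilde H$. Viewing $\tilde H$ as a two-by-two block matrix with top-left block $C$ and top-right block $\begin{pmatrix} B^\star & D \end{pmatrix}$, I would then verify the three conditions in the second statement of Theorem~\ref{theo.albert}.

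The central computation is the Schur complement $\tilde H/C$. A direct calculation gives
\begin{equation}
	\tilde H / C = \begin{pmatrix}
		A - B C^+ B^\star & X - B C^+ D \\
		X^\star - D^\star C^+ B^\star & E - D^\star C^+ D
	\end{pmatrix},
\end{equation}
and here the choice $X = B C^+ D$ does exactly the work we want: it annihilates the off-diagonal blocks, leaving the block-diagonal matrix with diagonal entries $A - B C^+ B^\star$ and $E - D^\star C^+ D$. These two entries are precisely the Schur complements $H[\alpha]/C$ and $H[\beta]/C$. Applying Theorem~\ref{theo.albert} to the positive semidefinite matrices $H[\alpha]$ and $H[\beta]$ (each pivoted on $C$) tells me, on the one hand, that both Schur complements are positive semidefinite, so that $\tilde H/C$ is positive semidefinite; and, on the other hand, that $R(B^\star)\subset R(C)$ and $R(D)\subset R(C)$. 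Since $C$ is a principal submatrix of a positive semidefinite matrix it is itself positive semidefinite, and the two range inclusions combine to give the column inclusion $R\big(\begin{pmatrix} B^\star & D\end{pmatrix}\big) = R(B^\star) + R(D) \subset R(C)$. All three conditions of Theorem~\ref{theo.albert} are therefore met, and the theorem yields that $\tilde H$, hence $H$, is positive semidefinite.

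I expect the only real subtlety to be bookkeeping rather than a genuine obstacle: one must pivot on the shared block $C$ and not on $A$, since pivoting on $A$ would leave $X$ entangled rather than decoupled; and one must check that the Moore-Penrose inverse $C^+$ appearing in $X = B C^+ D$ is the same generalized inverse used in the Schur complements of Theorem~\ref{theo.albert}. The range inclusions $R(B^\star)\subset R(C)$ and $R(D)\subset R(C)$ guarantee the identities $B C^+ C = B$ and $C C^+ D = D$, so the value of $B C^+ D$ is in fact independent of the choice of generalized inverse and the decoupling is unambiguous. The conceptual point---that $X = B C^+ D$ is exactly the value that kills the cross term in the Schur complement---is what reduces the completion problem to the two already-known positive semidefinite pieces $H[\alpha]$ and $H[\beta]$.
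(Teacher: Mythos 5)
Your proposal is correct and follows essentially the same route as the paper: apply Theorem~\ref{theo.albert} to $H[\alpha]$ and $H[\beta]$ to get positive semidefiniteness of $H[\alpha]/C$ and $H[\beta]/C$ together with the range inclusions $R(B^\star)\subset R(C)$ and $R(D)\subset R(C)$, observe that $X = BC^+D$ makes $H/C$ block diagonal, and then apply Theorem~\ref{theo.albert} in the reverse direction with pivot $C$. Your explicit permutation putting $C$ in the corner and the remark on independence from the choice of generalized inverse are just careful bookkeeping of steps the paper leaves implicit.
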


\begin{proof}
	We apply theorem \ref{theo.albert} to the positive semidefinite matrices $H[\alpha]$ and $H[\beta]$ to obtain:
	\begin{align}
		C & \ge 0 \\
		H[\alpha]/C & \ge 0 \label{eqn.hac}\\
		H[\beta]/C  & \ge 0 \label{eqn.hbc} \\
		R(B^\star) & \subset R(C) \label{eqn.bsc} \\				
		R(D) & \subset R(C) \label{eqn.dsc}
	\end{align}	
	The Schur complement $H/C$ is given by
	\begin{equation}
		H/C = \begin{pmatrix}
			A - B C^+ B^\star & X - B C^+ D \\
			X^\star - D^\star C^+ B^\star & E - D^\star C^+ D
		\end{pmatrix}.
	\end{equation}
	If we choose 
	\begin{equation}
		X = B C^+ D,
	\end{equation}
	and also recognize the expressions for $H[\alpha]/C$ and $H[\beta]/C$ we find
	\begin{equation}
		H/C = \begin{pmatrix}
			H[\alpha]/C & 0 \\
			0 & H[\beta]/C
		\end{pmatrix}.
	\end{equation}
	Because of equations (\ref{eqn.hac}) and (\ref{eqn.hbc}) we have
	\begin{equation}
		H/C \ge 0.
	\end{equation}
	Equations (\ref{eqn.bsc}) and (\ref{eqn.dsc}) then ensure that
	\begin{equation}
		R(B^\star\; D ) \subset R(C).
	\end{equation}
	Since we also have $C\ge 0$ we can use the theorem \ref{theo.albert} one more time, this time in the other direction, to obtain
	\begin{equation}
		H \ge 0.
	\end{equation} 
	This completes the proof.
\end{proof}

This result appeared already in \cite{smith}. We have provided a different proof that makes use of theorem \ref{theo.albert}. 

It turns out that the choice of $X$ in theorem \ref{theo.x} gives $H$ unique properties. It is in these characterizations of $X$ that we go beyond the results in \cite{smith} because we include the case in which $H$ is singular. The first result characterizes $X$ as the unique extension of $H$ that maximizes the determinant of $H$. If $H$ is nonsingular we can just talk about the regular determinant of $H$. If $H$ is singular we have to use the generalized determinant that we introduced in section \ref{sec.gendet}.

\begin{theorem}
	Let $H\in M_n(\C)$ be partitioned as in (\ref{eqn.defnewh}) and let $H[\alpha]$ and $H[\beta]$ be positive semidefinite and of maximal rank. The choice 
	\begin{equation}
		X = B C^+ D
	\end{equation}
	is the unique choice for $X$ for which $H$ is positive semidefinite, of maximal rank, and for which the (generalized) determinant is maximal.  
\end{theorem}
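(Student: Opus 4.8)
The plan is to reduce the full three-block completion problem to the two-block results already established, namely Fischer's inequality (Proposition \ref{prop.fischer}) and the Schur determinant lemma (Proposition \ref{prop.schur}), by factoring the generalized determinant through the Schur complement $H/C$. First I would invoke Theorem \ref{theo.x} to confirm that the proposed $X = BC^+D$ does yield a positive semidefinite $H$; the additional content here is the maximal-rank claim and the maximality/uniqueness of the generalized determinant. The key computational observation, already visible in the proof of Theorem \ref{theo.x}, is that with this choice of $X$ the Schur complement block-diagonalizes:
\begin{equation}
	H/C = \begin{pmatrix}
		H[\alpha]/C & 0 \\
		0 & H[\beta]/C
	\end{pmatrix}.
\end{equation}
For a general admissible $X$ that still makes $H$ positive semidefinite of maximal rank, the off-diagonal block of $H/C$ becomes $X - BC^+D$, while the diagonal blocks are unchanged.

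The next step is to apply Proposition \ref{prop.schur} to write $\detp H = \detp C \cdot \detp(H/C)$, which reduces the problem to maximizing $\detp(H/C)$ over admissible $X$. Here I would apply Proposition \ref{prop.fischer} (Fischer's inequality) to the two-by-two block matrix $H/C$, whose diagonal blocks are the fixed matrices $H[\alpha]/C$ and $H[\beta]/C$ and whose off-diagonal block is $X - BC^+D$. Fischer's inequality gives
\begin{equation}
	\detp(H/C) \le \detp(H[\alpha]/C)\; \detp(H[\beta]/C),
\end{equation}
with equality if and only if the off-diagonal block vanishes, i.e. if and only if $X = BC^+D$. Chaining these together yields $\detp H \le \detp C \cdot \detp(H[\alpha]/C)\cdot \detp(H[\beta]/C)$, a bound independent of $X$, attained exactly at the claimed value. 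This simultaneously establishes maximality and uniqueness of the maximizer.

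Before Proposition \ref{prop.schur} and Proposition \ref{prop.fischer} can be applied, one must verify their hypotheses, and this is where the main obstacle lies: both propositions require the relevant matrices to be of \emph{maximal rank} with respect to the block partition in use. I would therefore need to check that when $X = BC^+D$ the full matrix $H$ is of maximal rank with respect to the partition separating $C$ from the rest, i.e. that $N(H) = N(C) \oplus N(H/C)$, and that $H/C$ is itself of maximal rank with respect to its own block structure so that Fischer's inequality in its sharp form applies. The maximal-rank hypotheses on $H[\alpha]$ and $H[\beta]$ furnish $N(H[\alpha]/C) = N(A - BC^+B^\star)$ being compatible with the partition, and the block-diagonal structure of $H/C$ makes its nullspace the direct sum of the nullspaces of the two diagonal blocks; combined with the range inclusions (\ref{eqn.bsc}) and (\ref{eqn.dsc}), this should propagate maximal rank from the pieces to $H$. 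The delicate point is that for a \emph{competing} admissible $X \ne BC^+D$ one must also argue that requiring $H$ to remain of maximal rank does not secretly force the off-diagonal block of $H/C$ to vanish in some degenerate range directions, so that the equality condition in Fischer's inequality genuinely identifies a unique $X$ rather than an affine family; I expect this is handled by noting, as in the proof of Proposition \ref{prop.fischer}, that $X$ is already pinned down on the nullspace directions of $C$ by the column inclusion property, leaving the range restriction as the only free part, which Fischer's equality then determines uniquely.
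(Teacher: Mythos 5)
Your proposal follows essentially the same route as the paper: positive semidefiniteness via Theorem \ref{theo.x}, then $\detp H = \detp C \; \detp H/C$ by Proposition \ref{prop.schur}, then Fischer's inequality (Proposition \ref{prop.fischer}) applied to $H/C$, whose off-diagonal block is $X - BC^+D$, giving equality exactly at $X = BC^+D$. The one step you left as an expectation rather than an argument --- that the choice $X = BC^+D$ makes $H$ of maximal rank --- is settled in the paper by rank additivity over the Schur complement for positive semidefinite matrices (citing \cite{ouellette}): $\rank\ H = \rank\ C + \rank\ H/C = \rank\ C + \rank\ H[\alpha]/C + \rank\ H[\beta]/C = \rank\ C + \rank\ A + \rank\ E$.
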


\begin{proof}
We have shown in the last theorem that $H$ is positive semidefinite if we set $X = B C^+ D$. $H$ is also of maximal rank. In general we have
\begin{equation}
	\rank\ H \le \rank\ A + \rank\ C + \rank\ E.
\end{equation}
For a positive semidefinite matrix rank is additive over the Schur complement (see \cite{ouellette}) so that we actually have equality:
\begin{align}
	\rank\ H & = \rank\ C + \rank\ H/C \\
	& = \rank\ C + \rank\ H[\alpha]/C + \rank\ H[\beta]/C \\
	& = \rank\ C + \rank\ A + \rank\ E
\end{align}
To establish the last equality we have used the assumption that both $H[\alpha]$ and $H[\beta]$ are of maximal rank. Thus $X=B C^+ D$ turns $H$ into a matrix of maximal rank. We now want to show that it is the only such choice that also maximizes the determinant.

Let us now assume that $H$ is positive semidefinite and of maximal rank so that we can make use of propositions \ref{prop.fischer} and \ref{prop.schur}. Because of proposition \ref{prop.schur} we have
\begin{equation}
	\detp H = \detp C \; \detp H/C.
\end{equation}
Because of proposition \ref{prop.fischer} we have
\begin{equation}
	\detp H/C \le \detp H[\alpha]/C \detp H[\beta]/C,
\end{equation}
with equality if and only if
\begin{equation}
	X = B C^+ D.
\end{equation}
It follows that this $X$ is the unique choice that maximizes the determinant of $H$.
\end{proof}

For a nonsingular matrix $H$ we can use the determinant to find the inverse of $H$ (see \cite{matrixanalysis}):
\begin{equation}\label{eqn.inverseformula}
	H^{-1} = \frac{1}{\det H}\left(\frac{\partial}{\partial h_{ij}}\det H \right)^T
\end{equation}

Since $X$ was chosen such that the determinant is maximal the derivative in equation (\ref{eqn.inverseformula}) vanishes for indices $i$ and $j$ that denote elements of $X$ itself. It follows that $H^{-1}$ has zeroes in those places where the matrix $X$ sits in $H$. It turns out that this uniquely determines $X$ even if $H$ is only positive semidefinite and we have to talk about the Moore-Penrose inverse of $H$ instead.

\begin{theorem}
	Let $H\in M_n(\C)$ be partitioned as in (\ref{eqn.defnewh}) and let $H[\alpha]$ and $H[\beta]$ be positive semidefinite and of maximal rank. The choice 
	\begin{equation}
		X = B C^+ D
	\end{equation}
	is the unique choice for $X$ for which $H$ is of maximal rank and for which $H^{+}$ has zeroes in those places where $X$ sits in $H$.
\end{theorem}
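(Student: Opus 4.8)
The plan is to identify the block of $H^{+}$ occupying the $X$-positions with an off-diagonal block of a Schur complement, and then to read off exactly when that block vanishes. First I would permute rows and columns so that $C$ becomes the leading block, writing
\[
QHQ^\star=\begin{pmatrix} C & \tilde N\\ \tilde N^\star & M\end{pmatrix},\qquad
M=\begin{pmatrix} A & X\\ X^\star & E\end{pmatrix},\qquad
\tilde N=\begin{pmatrix} B^\star & D\end{pmatrix},
\]
for a permutation matrix $Q$. Since conjugation by $Q$ commutes with the Moore--Penrose inverse, $H^{+}=Q^\star (QHQ^\star)^{+}Q$, and the $X$-block of $H^{+}$ is precisely the $(A,E)$ (top-right) sub-block of the $M$-block of $(QHQ^\star)^{+}$. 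To compute the latter via Banachiewicz (proposition \ref{prop.bana}) I must check that the $C\,|\,M$ partition is of maximal rank, and this holds for \emph{every} admissible $X$: as a principal submatrix of the positive semidefinite $H$, $M$ is positive semidefinite, so column inclusion gives $N(A)\oplus N(E)\subset N(M)$; combining this with $N(C)\oplus N(M)\subset N(H)$ and the assumed maximal rank $N(H)=N(A)\oplus N(C)\oplus N(E)$ sandwiches the inclusions and forces $N(H)=N(C)\oplus N(M)$. Thus proposition \ref{prop.bana} applies, and its bottom-right block is $S^{+}$ with $S=(QHQ^\star)/C$.

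A direct computation gives
\[
S=\begin{pmatrix} S_\alpha & Y\\ Y^\star & S_\beta\end{pmatrix},\qquad
S_\alpha=A-BC^{+}B^\star=H[\alpha]/C,\quad S_\beta=E-D^\star C^{+}D=H[\beta]/C,\quad Y=X-BC^{+}D.
\]
By rank additivity over the Schur complement for positive semidefinite matrices, $\rank S=\rank H-\rank C=\rank A+\rank E$, while the maximal rank of $H[\alpha]$ and $H[\beta]$ (again via rank additivity) yields $\rank S_\alpha=\rank A$ and $\rank S_\beta=\rank E$. Hence $S$ is positive semidefinite of maximal rank with respect to its $S_\alpha\,|\,S_\beta$ partition, and a second application of proposition \ref{prop.bana}, now to $S$, shows that the $X$-block of $H^{+}$ equals $-S_\alpha^{+}Y\,(S/S_\alpha)^{+}$.

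For existence I would simply set $X=BC^{+}D$, so $Y=0$, $S=\mathrm{diag}(S_\alpha,S_\beta)$, and therefore $S^{+}=\mathrm{diag}(S_\alpha^{+},S_\beta^{+})$ has vanishing off-diagonal block; that $H$ is of maximal rank is the content of the preceding theorem. For uniqueness I would assume the $X$-block of $H^{+}$ is zero, i.e. $S_\alpha^{+}Y\,(S/S_\alpha)^{+}=0$, and deduce $Y=0$ by a range--nullspace chase. Taking adjoints (both factors are Hermitian) gives $(S/S_\alpha)^{+}Y^\star S_\alpha^{+}=0$, so $R(Y^\star S_\alpha^{+})\subset N\big((S/S_\alpha)^{+}\big)=N(S/S_\alpha)=N(S_\beta)$, the last equality being the Schur-complement nullspace identity established in proposition \ref{prop.schur} applied to the maximal-rank matrix $S$. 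On the other hand column inclusion for $S$ gives $R(Y^\star)\subset R(S_\beta)$, hence $R(Y^\star S_\alpha^{+})\subset R(S_\beta)\cap N(S_\beta)=\{0\}$; so $Y^\star S_\alpha^{+}=0$ and thus $S_\alpha^{+}Y=0$. Finally, $R(Y)\subset R(S_\alpha)$ together with $R(S_\alpha)\cap N(S_\alpha)=\{0\}$ upgrades $S_\alpha^{+}Y=0$ to $Y=0$, i.e. $X=BC^{+}D$.

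The main obstacle is the reduction itself: showing that the two-way $C\,|\,M$ partition inherits maximal rank from the three-way maximal rank of $H$, so that Banachiewicz is legitimately applicable for an \emph{arbitrary} admissible $X$ rather than only for the distinguished one. Once that inclusion chase is in place, the remainder is bookkeeping with ranges and nullspaces of Moore--Penrose inverses, the single genuinely non-formal ingredient being the nullspace identity $N(S/S_\alpha)=N(S_\beta)$ for maximal-rank matrices, which is exactly what allows the range--nullspace argument to close.
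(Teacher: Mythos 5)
Your proposal is correct and follows essentially the same route as the paper: apply proposition \ref{prop.bana} with $C$ as the pivot block, so that the block of $H^{+}$ occupying the $X$-positions is identified with the off-diagonal block of $(H/C)^{+}$, which vanishes precisely when $X = BC^{+}D$. In fact your write-up is more complete than the paper's own proof, which silently assumes that the two-block partition with pivot $C$ inherits maximal rank (your sandwich argument $N(C)\oplus N(M)\subset N(H)=N(A)\oplus N(C)\oplus N(E)\subset N(C)\oplus N(M)$ supplies this) and asserts without argument that block-diagonality of $(H/C)^{+}$ forces $X=BC^{+}D$, which your range--nullspace chase establishes.
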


\begin{proof}
	Because $H$ is of maximal rank we can use proposition \ref{prop.bana} to express the Moore-Penrose inverse of $H$ in terms of $C^+$ and $(H/C)^+$. If $H^+$ is to have zeroes where $X$ is in $H$ then we must have
	\begin{equation}
		(H/C)^+ = \begin{pmatrix}
			(H[\alpha]/C)^+ & 0 \\
			0 & (H[\beta]/C)^+  
		\end{pmatrix},
	\end{equation}
	which can only be the case if $X=B C^+ D$.
\end{proof}

For completeness we give the Moore-Penrose inverse for $H$:

\begin{equation}
	H^+ = \begin{pmatrix}
		(H[\alpha]/C)^+ & -(H[\alpha]/C)^+ B C^+ & 0 \\
		- C^+ B^\star (H[\alpha]/C)^+ & \Xi & - C^+ D (H[\beta]/C)^+ \\
		0 & - (H[\beta]/C)^+ D^\star C^+ & (H[\beta]/C)^+ 
	\end{pmatrix},
\end{equation}
with
\begin{equation}
	\Xi = C^+ + C^+ B^\star (H[\alpha]/C)^+ B C^+ + C^+ D (H[\beta]/C)^+ D^\star C^+.
\end{equation}

\section{Conclusion}
The problem of how to complete partial Hermitian matrices arises frequently in practical applications (see \cite{otherpaper} for an example from finance). This problem was solved in \cite{grone} for partial matrices whose corresponding graph is chordal. The procedure provided in \cite{grone} was improved upon in \cite{smith} by giving a way to calculate whole blocks of the completion at once. For positive definite matrices the resulting completion is singled out by two uniqueness results. It is the unique matrix that maximizes the determinant, and it is the unique matrix whose inverse has zeroes in those places that were unspecified in the original matrix. In this paper we have extended these uniqueness results to include semidefinite matrices. To make this extension possible we needed to introduce a generalized determinant that gives the determinant of the nontrivial part of a Hermitian matrix. We also needed to focus on matrices whose rank is determined solely by the rank of its diagonal matrices. For these matrices the same uniqueness results hold that hold for positive definite matrices. 

\section*{Acknowledgement}
I would like to thank Patrick B\"uchel for his support during the creation of this work as well as Horst K\"ohler and Thomas Streuer for their initial push to look into maximal determinant completions of matrices, and for their support during the creation of this work.

\end{document}